\newtheorem{theorem}{Theorem}
\newtheorem{lemma}[theorem]{Lemma}
\newcommand{\ra}{\rightarrow}
\newcommand{\sgn}{\mbox{sgn}}
\newcommand{\vol}{\mathrm{vol}}
\newcommand{\pr}{\mathrm{P}}
\title[Exact Probabilities in the balanced uniform model]{Exact Probabilities for Three and Four Dice in the Balanced Uniform Model of 3-Sided Dice}
\author{Kent E. Morrison}
\address{American Institute of Mathematics, 600 East Brokaw Road, San Jose, CA 95112}
\email{morrison@aimath.org}
\date{March 8, 2023}
\begin{document}
\maketitle

\begin{abstract}
\noindent We determine the exact probabilities of the different isomorphism classes of tournaments that result from random sets of three and four independent dice drawn from the balanced uniform model of $3$-sided dice. 
\end{abstract}
\large
\renewcommand{\baselinestretch}{1.2}   
\normalsize

\section{Background and Results}
By now it is well-recognized that various models of generalized dice exhibit surprisingly high rates of intransitivity for randomly chosen dice. Given two dice $A$ and $B$, where $A$ has face values $a_1,\ldots,a_n$ and $B$ has face values $b_1,\ldots,b_m$,  we say that $A$ \textbf{dominates} $B$, denoted by $A \succ B$, if it is more likely for $A$ to show a higher value than $B$, that is
\[  \sum_{i,j} \sgn(a_i - b_j) > 0. \]
If the sum is $0$, then neither die dominates the other and we say that the result is a tie. In this article we will always be working with dice with the same number of faces; i. e., $m=n$.

In 2016 Conrey, Gabbard, Grant, Liu, and Morrison  \cite{CGGLM2016} considered $n$-sided dice as integer multisets of size $n$ with elements in $\{1,2,\ldots,n\}$ and satisfying $\sum a_i = n(n+1)/2$. We made two conjectures about random sets of three $n$-sided dice as $n \ra \infty$.
\begin{itemize}
\item The probability of any ties goes to zero. 
\item The probability of an intransitive triple goes to $1/4$.
\end{itemize}
For three dice $A,B,C$ and in the absence of ties there are $2^3$ possible configurations for the relations between the three pairs $(A,B)$, $(B,C)$, and $(A,C)$. Two of the eight configurations represent the intransitive cycles $A \succ B \succ C \succ A$ and $A \succ C \succ B \succ A$. The other six configurations are transitive chains.  Evidence for the conjecture was provided by Monte Carlo simulations showing that the probabilities of all eight configurations appear to approach $1/8$, and so the probability of an intransitive triple approaches $1/4$ and the probability of a transitive triple approaches $3/4$. 
We also made much more speculative generalized conjectures concerning $k$ random dice for any fixed positive integer $k$. First, the probability that there is a tie between any of the $k$ dice goes to $0$. With no ties there are $2^{\binom{k}{2}}$ outcomes for all pairwise comparison, and each of these outcomes is represented by a complete directed graph on $k$ vertices,  i.e., a \emph{tournament}. The second generalized conjecture is that in the limit all the tournaments are equally probable.

A few years later these conjectures became the topic of a Polymath project. An alternative model was introduced, the \emph{balanced sequence model}, in which an $n$-sided die is a sequence $A=(a_1,\ldots,a_n)$ of elements in $\{1,2,\ldots,n\}$ with total $n(n+1)/2$. The balanced sequence model is easier to work than the multiset model. For example, a random  die is now a sequence of $n$ iid random variables uniform in $\{1,\ldots,n\}$ and conditioned  on the sum being $n(n+1)/2$. The main results in \cite{Polymath2022} are that the conjectures for three dice hold for the balanced sequence model. Thus, the questions for three dice in the multiset model are still open, but it would be a surprise if they turn out to be false.

At the same time the Polymath project spurred some other work, and there is strong computational evidence that the generalized conjecture about tournaments fails with four dice. If the $2^{\binom{4}{2}}$ tournaments were all equally probable, then the probability of a transitive chain and the probability of an intransitive cycle would both have limit $3/8=0.375$, but several independent simulations done for both the multiset and the sequence models produce data with the experimental probabilities somewhat greater than $0.38$.

Further evidence appears in the results of Cornacchia and H\k{a}z{\l}a \cite{CornacchiaHazla2020} for four dice in the balanced uniform model. In this model a random $n$-sided die is a point $(a_1,\ldots,a_n) \in [0,1]^n$ chosen uniformly and conditioned on $\sum_i  a_i = n/2$. They prove that there exists $\varepsilon > 0$ such that for $n$ sufficiently large, the probability is greater than $3/8 + \varepsilon$ that four random dice have a transitive tournament. In the same paper they prove that, in fact, the conjectures do hold for three dice in the balanced uniform model. That is, as $n \ra \infty$, the probability that three $n$-sided dice form an intransitive triple approaches $1/4$, and the probability that they form a transitive chain approaches $3/4$. 

In this note we consider random sets of three dice and four dice at the other extreme with $n=3$, which is the least number of sides of any interest. With three dice there are eight tournaments in two isomorphism classes. One isomorphism class consists of the six transitive chains and the other class contains the two intransitive cycles. We find the the transitive probability is exactly $973/1230=0.76015625$ and the intransitive probability is exactly $307/1280=0.23984375$. With four dice the 64 tournaments fall into four different isomorphism classes: the 24 completely transitive chains, the 24 intransitive cycles, the 8 tournaments with an overall winner and a 3-cyle, and the 8 tournaments with an overall loser and a 3-cycle. We find the probabilities are:
\begin{align*}
 \text{transitive} \quad  \frac{110413771}{25804800}  &\approx 0.42788 \\
 \\
 \text{intransitive}\quad   \frac{99930571}{25804800} &\approx 0.38726 \\
 \\
 \text{winner + 3-cycle} \quad  \frac{23851829}{25804800} & \approx 0.09243 \\
 \\
 \text{loser + 3-cycle}\quad   \frac{23851829}{25804800}  &\approx 0.09243
\end{align*}

\section{Three Dice}
The dominance relation between $A$ and $B$ is invariant under permutations of the coordinates, and so we can assume that $a_1 \le a_2 \le a_3$. We also have $a_1+a_2+a_3=3/2$. The sample space for a single die is the polygonal region in the $a_1  a_2$-plane defined by the inequalities
\[ 0 \le a_1, \quad a_1 \le a_2, \quad 1 \le 2a_1 + 2a_2 , \quad 2a_1 + 4a_2 \le 3 .\]
The third and fourth inequalities come from $a_3 \le 1$ and $a_2 \le a_3$ by replacing $a_3$ with $3/2 - a_1 -a_2$. 
Call this region $Q$. Its boundary is a quadrilateral within the unit square and it has area $1/8$. The probability measure on $Q$ is the normalized area. See Figure 1. 

\begin{figure}[h]
\begin{center}
\includegraphics[width=3.0in]{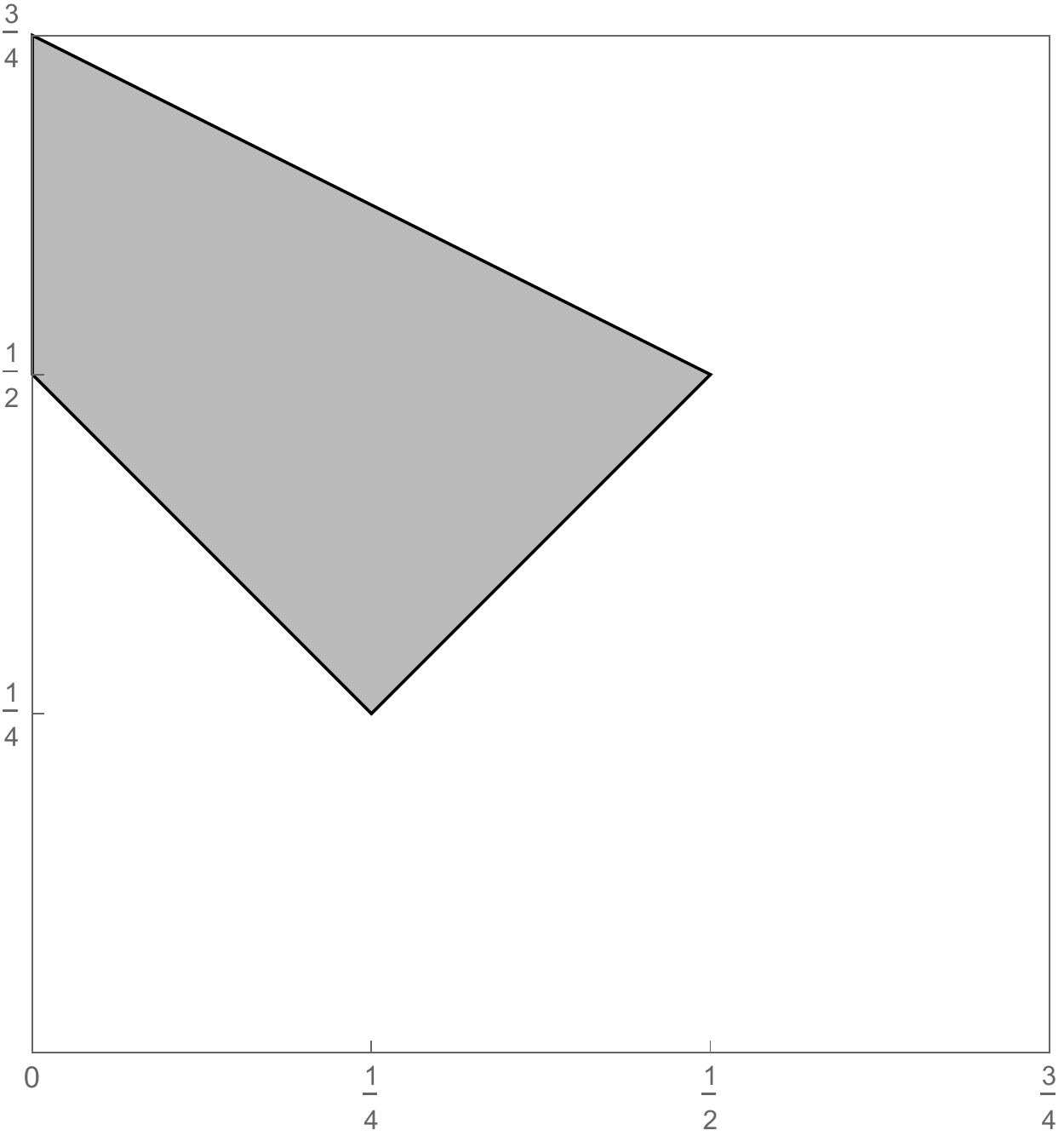}
\caption{The sample space $Q$ for a single random die}
\label{feasible}
\end{center}
\end{figure}

The sample space for three dice $(A,B,C)$ is the six-dimensional polytope $Q^3 \subset [0,1]^6$. It is defined by $12$ linear inequalities, four for each of the three dice. The volume of $Q^3$ is $1/8^3=1/512$. We use the coordinates $(a_1,a_2,b_1,b_2,c_1,c_2)$ for points in $Q^3$.

Our goal is to compute the volume of the subset of $Q^3$ defined by $A \succ B \succ C \succ A$. Let's first consider the inequalities that define $A \succ B$. It is not necessary to check all 9 comparisons between each $a_i$ and $b_j$. It is sufficient to simply compare $a_i$ with $b_i$.
\begin{lemma}
There are three different configurations for which $A \succ B$. They are
\begin{align*}
 (\succ_1):  a_1 < b_1, \quad a_2 > b_2, \quad a_3 > b_3 \\
 (\succ_2):  a_1 > b_1, \quad a_2 < b_2, \quad a_3 > b_3 \\
 (\succ_3):  a_1 > b_1, \quad a_2 > b_2, \quad a_3 < b_3
\end{align*}
\end{lemma}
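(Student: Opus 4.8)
The plan is to reduce the dominance relation, which a priori involves all nine comparisons $\sgn(a_i - b_j)$, to a statement about the three componentwise comparisons $a_k$ versus $b_k$. Since we work in a continuous model, almost surely all six values $a_1,a_2,a_3,b_1,b_2,b_3$ are distinct, so each of the nine comparisons is strict. Let $W = |\{(i,j) : a_i > b_j\}|$ be the number of comparisons won by $A$; then $B$ wins $9 - W$ of them and
\[ \sum_{i,j} \sgn(a_i - b_j) = W - (9 - W) = 2W - 9 . \]
Because $2W - 9$ is odd it never vanishes, so almost surely there are no ties and $A \succ B$ holds exactly when $W \ge 5$.

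The key bookkeeping device is the rank count $r_k = |\{j : b_j < a_k\}|$, so that $W = r_1 + r_2 + r_3$. Since $a_1 \le a_2 \le a_3$ we have the monotonicity $0 \le r_1 \le r_2 \le r_3 \le 3$, and since $b_1 \le b_2 \le b_3$ a one-line argument yields the crucial equivalence
\[ a_k > b_k \iff r_k \ge k . \]
Indeed $b_k < a_k$ forces $b_1,\ldots,b_k < a_k$ and hence $r_k \ge k$, while conversely $r_k \ge k$ means at least $k$ of the $b_j$ lie below $a_k$, which must include $b_k$. Thus the sign pattern $(\sgn(a_1 - b_1),\,\sgn(a_2 - b_2),\,\sgn(a_3 - b_3))$ is recorded by the Boolean vector $(r_1 \ge 1,\, r_2 \ge 2,\, r_3 \ge 3)$.

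Next I would eliminate the two degenerate patterns using the shared total. If $a_k > b_k$ for every $k$ then $\sum_k a_k > \sum_k b_k$, contradicting $\sum_k a_k = \sum_k b_k = 3/2$; likewise $a_k < b_k$ for every $k$ is impossible. Hence at least one componentwise comparison favors $A$ and at least one favors $B$, leaving exactly the six sign patterns with one or two entries equal to $+$.

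Finally I would check the six surviving patterns, in each case using $r_1 \le r_2 \le r_3$ to pin down $W$. For example $(+,-,+)$ forces $r_3 = 3$, $r_2 \le 1$, $r_1 \ge 1$, so $r_1 = r_2 = 1$ and $W = 5$; $(+,+,-)$ forces $r_2 = r_3 = 2$ and $r_1 \ge 1$, giving $W \in \{5,6\}$; and $(-,+,+)$ gives $W = r_2 + 3 \ge 5$. Each of the three patterns with two plus signs yields $W \ge 5$, hence $A \succ B$, and these are precisely $(\succ_1),(\succ_2),(\succ_3)$; by the same computation each pattern with a single plus sign yields $W \le 4$, hence $B \succ A$. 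The only place the argument can go wrong, and therefore the point to get right, is that monotonicity constrains the interior values of the $r_k$ tightly enough that the sign pattern alone determines on which side of $W = 5$ one lands; this is exactly what makes the componentwise comparison both necessary and sufficient.
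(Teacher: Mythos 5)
Your proof is correct, and its skeleton matches the paper's: the sum constraint $\sum_k a_k=\sum_k b_k=3/2$ rules out the all-plus and all-minus sign patterns, and the three patterns with exactly two componentwise wins for $A$ are precisely the ones giving $A \succ B$. The difference is that the paper simply asserts the key fact that two componentwise wins imply dominance, whereas you actually prove it, via the rank counts $r_k = |\{j : b_j < a_k\}|$, the identity $W=r_1+r_2+r_3$ for the number of won comparisons, and the equivalence $a_k>b_k \iff r_k\ge k$ for sorted dice. Your version also supplies two things the paper leaves implicit: the converse direction (a single componentwise win forces $W\le 4$, so $B\succ A$, which is needed to conclude that these three configurations are the \emph{only} ones giving $A\succ B$), and the observation that individual ties $a_i=b_j$ occur with probability zero in the continuous model, so the case analysis over strict sign patterns is exhaustive almost surely. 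All the arithmetic checks out (e.g.\ $(+,-,+)$ forces $r_1=r_2=1$, $r_3=3$, $W=5$), so this is a complete and somewhat more self-contained argument than the one in the paper.
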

\begin{proof}
If $a_i > b_i$ for all $i=1,2,3$, then $\sum a_i > \sum b_i$, which is impossible. However, if any two of the three hold, then $A$ dominates $B$. The configurations are labeled $\succ_i$ according to which inequality $a_i > b_i$ fails to hold.
\end{proof}

\begin{lemma}
If $A \succ_i B$ and $B \succ_i C$, then $A \succ_i C$. (That is, the relations $\succ_i$ are transitive.)
\end{lemma}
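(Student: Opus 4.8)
The plan is to observe that each relation $\succ_i$ is, by its definition, nothing more than a conjunction of three strict coordinatewise inequalities whose \emph{directions are fixed by the subscript $i$ alone} and do not depend on the particular pair of dice being compared. Spelling out the definition, $X \succ_i Y$ asserts $x_i < y_i$ together with $x_j > y_j$ for the two indices $j \neq i$. The key point is that this sign pattern is identical in the two hypotheses $A \succ_i B$ and $B \succ_i C$, so the inequalities line up head to tail and may be chained.

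Concretely, I would argue coordinate by coordinate. Fix $i$ and assume $A \succ_i B$ and $B \succ_i C$. For the losing coordinate $j = i$ we have $a_i < b_i$ and $b_i < c_i$, so transitivity of the strict order $<$ on $\R$ gives $a_i < c_i$. For each of the two winning coordinates $j \neq i$ we have $a_j > b_j$ and $b_j > c_j$, whence $a_j > c_j$. These three conclusions are precisely the defining inequalities of $A \succ_i C$, which finishes the argument.

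There is no genuine obstacle here: the statement is just the transitivity of the strict order on the reals applied componentwise with a sign pattern that the index $i$ holds constant. The only point worth emphasizing is the contrast with the unrefined dominance relation $\succ$, which need not be transitive precisely because a chain $A \succ B \succ C$ may be witnessed by \emph{different} configurations $\succ_i$ at the two steps; it is exactly the uniformity of the configuration across the chain that makes each individual $\succ_i$ transitive.
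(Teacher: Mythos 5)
Your proof is correct and is exactly the argument the paper compresses into ``Straightforward. Just write down the inequalities'': you chain the three coordinatewise strict inequalities, whose directions are determined by $i$ alone, to obtain the sign pattern defining $A \succ_i C$. Same approach, with the details (helpfully) written out.
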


\begin{proof} Straightforward. Just write down the inequalities.
\end{proof}

\begin{lemma}
If $A, B, C$ form an intransitive triple with $A \succ B \succ C \succ A$, then $A \succ_i B \succ_j C \succ_k A$ where $\{i,j,k\} = \{1,2,3\}$. 
\end{lemma}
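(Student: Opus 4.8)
The plan is to argue by contradiction, showing that no two of the three dominance relations in the cycle can share a common index, and then to invoke the transitivity lemma established just above at each step. Writing the intransitive cycle as $A \succ_i B \succ_j C \succ_k A$, where each subscript records which of the three coordinate inequalities $a_p > b_p$ (etc.) fails, I want to prove that $i$, $j$, $k$ are pairwise distinct; since they all lie in $\{1,2,3\}$, this immediately forces $\{i,j,k\} = \{1,2,3\}$.

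First I would record the elementary antisymmetry of dominance in the absence of ties: because $\sum_{p,q}\sgn(a_p - c_q)$ and $\sum_{p,q}\sgn(c_p - a_q)$ are negatives of one another, the relations $A \succ C$ and $C \succ A$ cannot both hold. This is the contradiction I will steer toward, and the analogous statement holds for any pair among $A$, $B$, $C$.

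Next, suppose for contradiction that two consecutive relations in the cycle carry the same index, say $i = j$, so that $A \succ_i B$ and $B \succ_i C$. The transitivity lemma then yields $A \succ_i C$, hence $A \succ C$, contradicting $C \succ A$. The same reasoning applies cyclically: if $j = k$ then $B \succ_j C$ and $C \succ_j A$ give $B \succ_j A$, contradicting $A \succ B$; and if $k = i$ then $C \succ_i A$ and $A \succ_i B$ give $C \succ_i B$, contradicting $B \succ C$. In each case I am simply relabeling the dice in the transitivity lemma, which is legitimate since that lemma holds for an arbitrary ordered triple.

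I do not anticipate any genuine obstacle: the argument is a three-fold application of the transitivity lemma paired with antisymmetry. The only point needing slight care is the cyclic relabeling—one must confirm that applying the transitivity statement to the reordered triples $(B,C,A)$ and $(C,A,B)$ is valid, which it is because the three dice play symmetric roles and the lemma was established for an arbitrary triple. Having ruled out all three possible coincidences of indices, the values $i,j,k$ are distinct and therefore exhaust $\{1,2,3\}$, which is the claim.
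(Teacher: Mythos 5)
Your proposal is correct and follows essentially the same route as the paper's proof: assume two indices coincide, apply the transitivity of $\succ_i$ to obtain a dominance relation that contradicts the opposite relation in the cycle. You merely spell out all three coincidence cases and the antisymmetry of dominance explicitly, where the paper compresses this to a single case ``by relabeling.''
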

\begin{proof}
Assume that the three relations are not distinct. By relabeling, if necessary,  we can assume that $i=j$. Then $A \succ_i B$ and $B \succ_i C$, and therefore $A \succ_i C$, which is a contradiction.
\end{proof}

For a permutation $\sigma=\sigma_1 \sigma_2 \sigma_3$ define $E_\sigma$ to be both the event 
\[ A \succ_{\sigma_1} B \succ_{\sigma_2} C \succ_{\sigma_3} A \]
and the subset of $Q^3$ representing the event.
Each $E_\sigma$ is a polytope defined by nine additional inequalities, three for each of the three relations. The union $E = \bigcup E_\sigma$ is the event that $A \succ B \succ C \succ A$, and $\pr(E)=\vol(E) /\vol(Q^3)$. Also, $\vol(E) = \sum_\sigma \vol(E_\sigma)$.
The volumes of the $E_\sigma$ are computed using the SageMath interface to LattE integrale. 

 \begin{lemma} 
 \begin{align*} 
 \pr(E_\sigma) &= 
      \begin{cases}
            \displaystyle \frac{23}{1800} & \text{if $\sigma = 123, 231,  312$} \\
              & \quad \\
             \displaystyle \frac{3133}{115200} & \text{if $\sigma = 132, 213, 312$}
       \end{cases} \\
       &\quad \\
  \pr(E) &= \frac{307}{2560} 
  \end{align*}    
 \end{lemma}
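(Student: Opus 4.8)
The plan is to realize each $E_\sigma$ as an explicit rational polytope, compute its volume, and then assemble the answer. Each $E_\sigma \subset Q^3$ is cut out by the twelve inequalities defining $Q^3$ together with nine more, three for each of the relations $A \succ_{\sigma_1} B$, $B \succ_{\sigma_2} C$, $C \succ_{\sigma_3} A$. Substituting $a_3 = 3/2 - a_1 - a_2$ (and likewise for $b_3, c_3$) makes every one of these linear in the six coordinates $(a_1,a_2,b_1,b_2,c_1,c_2)$; for instance $a_3 > b_3$ becomes $a_1 + a_2 < b_1 + b_2$. By the preceding lemmas the six events are pairwise disjoint up to a set of measure zero: the defining inequalities are strict, and at any generic point exactly one coordinate of each dominating pair has its smaller entry first, so the index $\succ_i$ is forced. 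Hence $\vol(E) = \sum_\sigma \vol(E_\sigma)$ and $\pr(E) = 512 \sum_\sigma \vol(E_\sigma)$.

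Before computing, I would halve the work with a symmetry. The cyclic relabeling $\tau:(A,B,C) \mapsto (B,C,A)$ acts on $Q^3$ by the coordinate permutation $(a_1,a_2,b_1,b_2,c_1,c_2) \mapsto (b_1,b_2,c_1,c_2,a_1,a_2)$, which preserves both $Q^3$ and Lebesgue measure. Tracking the three relations through $\tau$ shows $\tau(E_{\sigma_1\sigma_2\sigma_3}) = E_{\sigma_2\sigma_3\sigma_1}$, so the index string is merely shifted cyclically. The two cyclic orbits are $\{123,231,312\}$ and $\{132,213,321\}$, matching the two groupings in the statement, and within each orbit the three polytopes have equal volume. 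It therefore suffices to compute $\vol(E_{123})$ and $\vol(E_{132})$ exactly — the claimed $23/1800$ and $3133/115200$ after multiplying by $512$ — and every other equality follows for free.

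The computational heart is then the exact volume of a single six-dimensional polytope with about twenty-one facets. I would feed it to an exact rational volume routine — the SageMath interface to LattE integrale, as the authors do — which returns the volume as an exact fraction with no floating-point error. A human-checkable alternative is to triangulate the polytope into simplices and sum $|\det|/6!$, or to integrate the constant $1$ by Fubini, peeling off one coordinate at a time while carrying the piecewise-linear bounds. I expect this to be the main obstacle: in six dimensions the triangulation has many cells and the iterated integral branches into many cases, so doing it reliably by hand is delicate, which is precisely why a certified machine computation is the right tool. As an independent check I would compare the two fractions against a high-precision Monte Carlo estimate over $Q^3$.

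Finally I would assemble the pieces, combining the orbit counts with the two representative values:
\[ \pr(E) = 3\cdot\frac{23}{1800} + 3\cdot\frac{3133}{115200} = \frac{4416 + 9399}{115200} = \frac{13815}{115200} = \frac{307}{2560}, \]
the last step dividing numerator and denominator by $\gcd = 45$. This is the asserted value, and doubling it (by the analogous computation for the opposite cycle $A \succ C \succ B \succ A$) recovers the total intransitive probability $307/1280$ quoted in the introduction.
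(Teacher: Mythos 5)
Your proposal is correct and follows essentially the same route as the paper: reduce to the two cyclic-orbit representatives $E_{123}$ and $E_{132}$ via the volume-preserving relabeling $(A,B,C)\mapsto(B,C,A)$, compute their exact volumes with LattE integrale through Sage, and sum $3\cdot\frac{23}{1800}+3\cdot\frac{3133}{115200}=\frac{307}{2560}$. You also implicitly correct the statement's typo by identifying the second orbit as $\{132,213,321\}$ rather than the printed $312$.
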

 \begin{proof}
   See Section 4 for the code. Note that it suffices to compute the volumes of $E_{123}$ and $E_{132}$ because $E_\sigma$ and $E_{\sigma'}$ have the same volume if $\sigma'$ is a cyclic permutation of $\sigma$. The reason is that the cyclic permutation $(A,B,C) \mapsto (B,C,A)$ is volume preserving and  maps $E_{\sigma_1 \sigma_2 \sigma_3}$ to $E_{\sigma_2 \sigma_3 \sigma_1}$.

 \end{proof}
 
 \begin{theorem}
  The probability that $A, B, C$ form an intransitive triple is $307/1280 = 0.23984375$. The probability that they form a transitive chain is $973/1280=0.76015625$.
 \end{theorem}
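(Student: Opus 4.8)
The plan is to deduce the theorem from the volume computation already recorded in the preceding lemma, using one symmetry observation and one measure-theoretic observation. The preceding lemma gives $\pr(E) = 307/2560$, where $E \subset Q^3$ is the single cyclic event $A \succ B \succ C \succ A$. There are exactly two intransitive cycles on the three labeled dice, namely $A \succ B \succ C \succ A$ and $A \succ C \succ B \succ A$; write $E'$ for the second. My first step is to show $\vol(E) = \vol(E')$. Consider the map $\phi$ on $Q^3$ that interchanges the second and third dice, $\phi(a_1,a_2,b_1,b_2,c_1,c_2) = (a_1,a_2,c_1,c_2,b_1,b_2)$. It is a linear coordinate permutation, hence a measure-preserving involution of $Q^3$, and a direct check of the defining inequalities shows that it carries $E$ onto $E'$. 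Therefore the event ``$A, B, C$ form an intransitive triple'' is $E \cup E'$, a disjoint union, and its probability is $2\pr(E) = 2 \cdot 307/2560 = 307/1280$.

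The second step is to show that the complement of the intransitive event is, up to a null set, exactly the transitive event; equivalently, that ties occur with probability zero. Here I would invoke the first lemma. For the sorted dice the comparison of $A$ and $B$ is governed by the three diagonal signs $\sgn(a_i - b_i)$, which off a measure-zero set are all nonzero; because $\sum_i a_i = \sum_i b_i = 3/2$ they cannot all have the same sign, so exactly two agree and the first lemma forces strict dominance of one die over the other. Thus a tie between two dice can occur only on the locus where $a_i = b_i$ for some $i$, a finite union of lower-dimensional subsets of $Q^3$; since $Q^3$ carries normalized Lebesgue measure, the union over all three pairs of dice of these loci has probability zero.

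Combining the two steps, the sample space $Q^3$ splits, up to a set of measure zero, into the intransitive event and the transitive event, so
\[ \pr(\text{transitive}) = 1 - \pr(\text{intransitive}) = 1 - \frac{307}{1280} = \frac{973}{1280}. \]
I expect the only point requiring care to be the tie argument, where one must be sure that the dominance relation for $n = 3$ is genuinely decided by the diagonal comparisons $a_i$ versus $b_i$ and that the coincidence loci $a_i = b_i$ are the sole source of ties; once that is in hand, the measure-zero conclusion and the symmetry doubling are immediate. All of the genuine computation, namely the exact volumes of $E_{123}$ and $E_{132}$ obtained from LattE, has already been carried out in the preceding lemma.
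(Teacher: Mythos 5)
Your proof is correct and takes essentially the same route as the paper: the intransitive probability is $2\,\pr(E)$ by the symmetry exchanging $B$ and $C$ (the paper phrases this as $\pr(A \succ B \succ C \succ A)=\pr(A \prec B \prec C \prec A)$), and the transitive probability is the complement. You are in fact slightly more careful than the paper, which passes to the complementary probability without comment; your observation that ties are confined to the measure-zero loci $a_i=b_i$ (via Lemma 1) explicitly justifies that step.
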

 \begin{proof}
 By symmetry $\pr(A \succ B \succ C \succ A)=\pr(A \prec B \prec C \prec A)$, which is $\pr(E)$, and so the probability of an intransitive triple is  $2 \pr(E)$. The transitive probability is the complementary probability $1-2 \pr(E)$.
  \end{proof}
 
\section{Four Dice}
 
 The sample space for four dice $(A,B,C.\,D)$ is the eight-dimensional polytope $Q^4 \subset [0,1]^8$. It is defined by $16$ linear inequalities, four for each of the four dice. The volume of $Q^4$ is $1/8^4=1/4096$. We use the coordinates $(a_1,a_2,b_1,b_2,c_1,c_2,d_1,d_2)$ for points in $Q^4$.

\newcommand{\trfour}{P_{4\text{-line}}}
\newcommand{\trthree}{P_{3\text{-line}}}
\newcommand{\itrfour}{P_{\square}}
\newcommand{\itrthree}{P_{\bigtriangleup}}
\newcommand{\winner}{P_{1 \ra \bigtriangleup}}
\newcommand{\loser}{P_{1 \leftarrow \bigtriangleup}}
\newcommand{\winlose}{P_{1,\bigtriangleup}}

The main computation that we need to do is to find the probability that $A \succ B \succ C \succ D \succ A$. Then using symmetry and the results already computed for three dice we will have enough to determine the probabilities of the four different tournament types. To see how that works, we introduce the following notation for the probabilities of the different isomorphism classes for three and four dice.

\begin{center}
\begin{tabular}{lc}
transitive 3-chain & $\trthree$ \\
intransitive 3-cycle & $\itrthree$ \\
transitive 4-chain & $\trfour$ \\
intransitive 4-cycle & $\itrfour$ \\
winner + 3-cycle & $\winner$ \\
loser + 3-cycle & $\loser$ \\
\end{tabular}
\end{center}

Generating  a random set of four dice and then deleting one of them at random gives a random set of three dice. If the four dice are a transitive chain, then the remaining three dice will be transitive. If the four dice are a 4-cycle, then removing one of them results in a 3-cycle or a 3-chain, with two possibilities for each. If the four dice are a 3-cycle plus winner/loser, then the result is a 3-cycle when the winner/loser is removed or a 3-chain if one of the other three dice is removed. Therefore,
\begin{align*}
   \trthree &= \trfour + \frac{1}{2} \itrfour +\frac{3}{4}\big(\winner + \loser \big) \\
   \itrthree &= \frac{1}{2} \itrfour +\frac{1}{4} \big(\winner + \loser \big) 
\end{align*}
 Furthermore, $\winner = \loser$, because the transformation 
 \[ A=(a_1,a_2,a_3) \mapsto A^*=(1-a_3,1-a_2,1-a_1)\]
reverses the relation between dice, and so the tournament associated to $(A^*,B^*,C^*,D^*)$ is the result of reversing all the edges in the tournament of $(A,B,C,D)$. Therefore, once we have found $\itrfour$, we use it and the already known $\trthree$ and $\itrthree$ to solve for the remaining probabilities. 
 
 \begin{lemma}
 $ \itrfour = 6 \, \pr(A \succ B \succ C \succ D \succ A) .$
 \end{lemma}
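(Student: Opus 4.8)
The plan is to identify the isomorphism class $\itrfour$ with the event that the tournament on $\{A,B,C,D\}$ contains a directed Hamiltonian cycle, and then to exploit the exchangeability of the four iid dice. First I would record the structural characterization of the intransitive 4-cycle class. The 24 tournaments in this class are exactly those with score sequence $(1,1,2,2)$, equivalently the strongly connected tournaments on four vertices; each of the three other classes contains a vertex of in-degree $0$ (an overall winner) or out-degree $0$ (an overall loser), and such a vertex cannot lie on any directed cycle, so those tournaments contain no directed cycle through all four vertices. Thus a tournament on four labeled vertices lies in the $\itrfour$ class if and only if it contains a directed Hamiltonian cycle.

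The key combinatorial step I would verify next is that this Hamiltonian cycle is \emph{unique}. A short incidence count does it: there are $3!=6$ directed Hamiltonian cycles on the labeled vertex set, and each such cycle prescribes four of the six pairwise edges, leaving the two ``diagonal'' comparisons free, so each is contained in exactly $2^2=4$ tournaments. This gives $6\cdot 4 = 24$ incidences (cycle, containing tournament). Since every incident tournament lies in the 24-element class $\itrfour$, and no transitive, winner$+$3-cycle, or loser$+$3-cycle tournament supports a directed Hamiltonian cycle, each of the 24 members of $\itrfour$ carries exactly one directed Hamiltonian cycle. Consequently the six events ``the tournament contains the directed cycle $\pi$'', as $\pi$ ranges over the six directed Hamiltonian cycles, are pairwise disjoint and have union exactly the event $\itrfour$.

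To translate this into probability, I would first note that since $n=3$ is odd, each pairwise comparison is a sum of nine terms $\sgn(a_i-b_j)$, which is odd and hence nonzero almost surely, so ties occur with probability zero and the tournament is well defined a.s. By the partition just established, $\itrfour = \sum_\pi \pr(\text{cycle }\pi)$. Because $A,B,C,D$ are independent and identically distributed, the joint law is invariant under relabeling the four dice; any permutation $\tau \in S_4$ carries the event $A \succ B \succ C \succ D \succ A$ to another directed Hamiltonian cycle while preserving probability, and these images exhaust all six cycles. Hence every summand equals $\pr(A \succ B \succ C \succ D \succ A)$, and summing the six equal terms yields $\itrfour = 6\,\pr(A \succ B \succ C \succ D \succ A)$. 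The main obstacle is the structural counting in the second paragraph—showing that the six directed-cycle events partition $\itrfour$; once that uniqueness is secured, the exchangeability argument is immediate.
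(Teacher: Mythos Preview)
Your proof is correct and follows the same idea as the paper, which dispatches the lemma in one line (``There are six ways to label the vertices of a 4-cycle''). You have simply made explicit what the paper leaves to the reader: that the six events ``the tournament contains the directed Hamiltonian cycle $\pi$'' are pairwise disjoint (via the incidence count showing uniqueness of the Hamiltonian cycle), that their union is exactly the $\itrfour$ class, and that exchangeability of the iid dice gives them equal probability.
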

 \begin{proof}
 There are six ways to label the vertices of a 4-cycle.
 \end{proof}
A priori, there are 81 different polytopes in $Q^4$ to consider because each of the four relations in the cycle $A \succ B \succ C \succ D \succ A$ is $\succ_1, \succ_2$, or $\succ_3$. For $\sigma = \sigma_1\sigma_2\sigma_3\sigma_4$, where $\sigma_i \in\{1,2,3\}$, let $G_\sigma$ be the event
 $A \succ_{\sigma_1} B \succ_{\sigma_2} C \succ_{\sigma_3} D \succ_{\sigma_4} A$ as well as  the corresponding subset of $Q^4$. The union $G = \bigcup G_\sigma$ corresponds to the event $A \succ B \succ C \succ D \succ A$, and $\pr(G) = \sum \pr(G_\sigma)$. Fortunately, we can reduce the number of $G_\sigma$ whose volumes need to be computed. 
 
 \begin{lemma}
  If $\sigma'$ is a cyclic permutation of $\sigma$, then $\vol(G_{\sigma'})= \vol(G_\sigma)$.
 \end{lemma}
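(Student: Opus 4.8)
The plan is to follow the same reasoning used for the three-dice events $E_\sigma$: a cyclic relabeling of the dice is a volume-preserving map of $Q^4$ onto itself that carries one event $G_\sigma$ onto the event indexed by the cyclically shifted word.

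First I would introduce the map $\phi \colon Q^4 \ra Q^4$ defined by $(A,B,C,D) \mapsto (B,C,D,A)$. In terms of the coordinates $(a_1,a_2,b_1,b_2,c_1,c_2,d_1,d_2)$ this is simply a permutation of the eight coordinates, and because $Q^4$ is the product of four identical copies of $Q$, it maps $Q^4$ bijectively onto itself. A permutation of coordinates has determinant $\pm 1$, so $\phi$ preserves volume.

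Next I would track the effect of $\phi$ on the defining inequalities. If $(A,B,C,D) \in G_{\sigma_1\sigma_2\sigma_3\sigma_4}$, then reading the four relations off the image tuple $(B,C,D,A)$ yields $B \succ_{\sigma_2} C \succ_{\sigma_3} D \succ_{\sigma_4} A \succ_{\sigma_1} B$. Thus $\phi$ sends $G_{\sigma_1\sigma_2\sigma_3\sigma_4}$ bijectively onto $G_{\sigma_2\sigma_3\sigma_4\sigma_1}$, and since $\phi$ is volume-preserving we conclude $\vol(G_{\sigma_1\sigma_2\sigma_3\sigma_4}) = \vol(G_{\sigma_2\sigma_3\sigma_4\sigma_1})$.

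Finally, every cyclic permutation of $\sigma$ arises by iterating this single-step shift, so applying the appropriate power of $\phi$ shows that all the cyclic rotations of a word index events of equal volume. I do not expect any genuine obstacle here; the one point to verify with care is that the relation wrapping around the cycle, $D \succ_{\sigma_4} A$, becomes the wrap-around relation of the rotated cycle under the relabeling, which it does precisely because rotation preserves the cyclic order of the four dice.
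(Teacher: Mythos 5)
Your proposal is correct and matches the paper's own argument: the paper likewise uses the coordinate-permutation map $(A,B,C,D)\mapsto(B,C,D,A)$, observes it is an isometry of $Q^4$, and notes it carries $G_{\sigma_1\sigma_2\sigma_3\sigma_4}$ onto $G_{\sigma_2\sigma_3\sigma_4\sigma_1}$. Your additional remarks about iterating the shift and about the wrap-around relation simply make explicit what the paper leaves implicit.
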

 \begin{proof}
   The map $Q^4 \ra Q^4: (A,B,C,D) \mapsto (B,C,D,A)$ is an isometry and maps $G_\sigma$ to $G_{\sigma'}$ where $\sigma=(\sigma_1,\sigma_2,\sigma_3,\sigma_4)$ and $\sigma'=(\sigma_2,\sigma_3,\sigma_4,\sigma_1)$.
 \end{proof}
 
\begin{lemma}
 If $\sigma$ has one or two distinct entries, then $G_\sigma = \emptyset$.
\end{lemma}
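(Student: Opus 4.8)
The plan is to detect the emptiness of $G_\sigma$ by examining just a single one of the three coordinate indices. Recall the convention behind the relations $\succ_i$: in $A \succ_i B$ the $i$-th coordinate inequality is the reversed one, $a_i < b_i$, while both of the other coordinates satisfy $a_j > b_j$. The key point is therefore that a relation $\succ_{\sigma_t}$ imposes the \emph{non-reversed} inequality in coordinate $j$ for every $j \ne \sigma_t$.

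First I would run a pigeonhole count on the word $\sigma = \sigma_1\sigma_2\sigma_3\sigma_4$. Letting $m_k$ be the number of positions $t$ with $\sigma_t = k$, we have $m_1 + m_2 + m_3 = 4$. The hypothesis that $\sigma$ has one or two distinct entries means that at most two of the $m_k$ are positive, so at least one value $j \in \{1,2,3\}$ is missing entirely, i.e. $m_j = 0$.

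Next I would isolate the $j$-th coordinate of the four dice and read the four relations of $G_\sigma$ around the cycle $A \succ_{\sigma_1} B \succ_{\sigma_2} C \succ_{\sigma_3} D \succ_{\sigma_4} A$. Since $\sigma_t \ne j$ for every $t$, each relation contributes the non-reversed inequality in coordinate $j$, giving $a_j > b_j$, $b_j > c_j$, $c_j > d_j$, and $d_j > a_j$. Chaining these produces
\[ a_j > b_j > c_j > d_j > a_j , \]
hence $a_j > a_j$, which is impossible. Thus no point of $Q^4$ satisfies the defining inequalities of $G_\sigma$, and $G_\sigma = \emptyset$.

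There is no genuinely hard step here; the only thing to be careful about is the bookkeeping of which inequality each $\succ_{\sigma_t}$ forces in the fixed coordinate $j$, together with the elementary observation that four positions filled from a two-symbol alphabet must leave one of the three symbols unused. It is worth noting that the same single-coordinate argument proves slightly more --- $G_\sigma$ is empty as soon as any value is absent from $\sigma$ --- so that a nonempty $G_\sigma$ requires all of $1,2,3$ to appear; combined with the cyclic-reduction lemma, this is what trims the original list of $81$ polytopes down to the few that actually contribute.
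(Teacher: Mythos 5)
Your proof is correct, and it takes a cleaner, more uniform route than the paper. The paper splits the bad words $\sigma$ into four shapes --- $(iiii)$, $(iiij)$, $(iijj)$, $(ijij)$ --- and argues each separately, invoking the transitivity of the relations $\succ_i$ (Lemma~5) for the $(iijj)$ case and ad hoc inequality-chasing for the others. You instead observe that ``at most two distinct entries'' over the alphabet $\{1,2,3\}$ is exactly the condition that some coordinate index $j$ never occurs in $\sigma$, and that each relation $\succ_{\sigma_t}$ with $\sigma_t \ne j$ imposes the non-reversed inequality in coordinate $j$; chaining around the cycle gives $a_j > b_j > c_j > d_j > a_j$, an immediate contradiction. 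This is precisely the argument the paper uses only for its $(ijij)$ case, promoted to handle all cases at once, and it dispenses with the transitivity lemma entirely. Note only that your closing remark that the argument ``proves slightly more'' is not actually a strengthening here: for a word of any length over $\{1,2,3\}$, having at most two distinct entries is equivalent to omitting some value, so the two hypotheses coincide. That aside, the proof is complete and arguably preferable to the paper's case analysis.
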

\begin{proof}
The $\sigma$ in question are the following and all cyclic permutations of them:
\begin{align*}
  &(iiii): 1111, 2222, 3333  \\
  &(iiij): 1112, 1113, 2221, 2223, 3331, 3332 \\
  &(iijj):  1122, 1133, 2233 \\
  &(ijij):  1212, 1313, 2323
 \end{align*}
For $\sigma$ in the first group the chain of inequalities obviously leads to a contradiction. Now let $\sigma = 1112$, which means that $a_1 < b_1 < c_1 < d_1$ and $d_2 < a_2$. But we also have $a_2 > b_2 > c_2 > d_2$, which is a contradiction. The remaining five $\sigma$ in the second group are disposed of similarly. From the third group we use the fact that the $\succ_i$ are transitive. If $\sigma = iijj$  then $G_\sigma$ is the event $A \succ_i B \succ_i C \succ_j D \succ_j A$, and this  implies that $A \succ_i C$ and $C \succ_j A$, which is impossible. Finally for $\sigma=ijij$ we consider  $A \succ_i B \succ_j C \succ_i D \succ_j A$. Then for $k \ne i, j$ we have $a_k > b_k > c_k > d_k >a_k$, which is impossible.
\end{proof}
%
 \begin{lemma}
 There are 36 different $\sigma$ for which $\pr(G_\sigma) >0$. Each is a cyclic permutation of one of the nine on the list below. If $\sigma'$ is a cyclic permutation of $\sigma$, then $\pr(G_{\sigma'})=\pr(G_\sigma)$.
 \begin{center}
 \begin{tabular}{cc}
 $\sigma$ & $\pr(G_\sigma)$ \\
 $1123$ & $229/322560$ \\
 $1132$ & $691507/294912000$ \\
$1213$ &  $40913/15482880$ \\
$1223$ & $5431/8064000$ \\
$1232$ & $32299/16515072$ \\
$1322$ & $38929/18432000$ \\
$1233$ & $229/322560$ \\
$1323$ & $40913/15482880$ \\
$1332$ & $691507/294912000$
 \end{tabular}
 \end{center}
 \end{lemma}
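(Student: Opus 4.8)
The plan is to separate a purely combinatorial reduction, which isolates the nine cyclic orbits, from the nine exact volume computations, which supply the numbers in the right-hand column. First I would dispose of the sequences $\sigma$ that cannot contribute. The preceding lemma shows $G_\sigma = \emptyset$ whenever $\sigma$ uses at most two of the values $1,2,3$, so any $\sigma$ with $\pr(G_\sigma) > 0$ must use all three. Counting among the $3^4 = 81$ sequences, there are $3$ that use a single value and, for each of the $\binom{3}{2} = 3$ pairs of values, $2^4 - 2 = 14$ surjective sequences, hence $42$ using exactly two values. This leaves $81 - 3 - 42 = 36$ sequences that use all three values, and these are the only candidates for positive probability.

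Next I would organize these $36$ sequences into cyclic orbits. Each such $\sigma$ is an arrangement of a multiset $\{i,i,j,k\}$ with $\{i,j,k\} = \{1,2,3\}$: one value is doubled and the other two appear exactly once. Since $j$ and $k$ each occur only once, $\sigma$ is fixed by no nontrivial cyclic shift---a shift by $2$ would force $\sigma_1 = \sigma_3$ and $\sigma_2 = \sigma_4$ (only two values), and a shift by $1$ or $3$ would force all entries equal---so every cyclic orbit has size exactly $4$. Therefore there are $36/4 = 9$ orbits, three for each choice of the doubled value. A direct enumeration (each orbit contains a sequence beginning with $1$, since $1$ appears) confirms that the nine sequences in the left column lie in nine distinct orbits which together exhaust the $36$ candidates. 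The assertion that cyclically equivalent $\sigma$ share the same probability is exactly the preceding cyclic-invariance lemma.

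It remains to evaluate the nine probabilities. For a fixed representative $\sigma$, the set $G_\sigma$ is the polytope in $Q^4 \subset [0,1]^8$ cut out by the $16$ inequalities defining $Q^4$ (four per die) together with the $12$ inequalities from the four relations $\succ_{\sigma_1}, \succ_{\sigma_2}, \succ_{\sigma_3}, \succ_{\sigma_4}$ (three inequalities per relation, exactly as in the three-dice lemma). I would hand these $28$ linear inequalities for each representative to the SageMath interface to LattE integrale, read off the exact rational volume $\vol(G_\sigma)$, and normalize by $\vol(Q^4) = 1/4096$ to obtain $\pr(G_\sigma)$. The outputs are the nine entries on the right. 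Because every one is strictly positive, the computation simultaneously confirms that none of these polytopes is empty or lower-dimensional, so exactly the $36$ sequences in the nine orbits have $\pr(G_\sigma) > 0$.

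The steps above are conceptually routine; the genuine labor---and the main source of possible error---is the exact volume computation. The delicate points are transcribing the twelve $\succ_{\sigma_t}$ inequalities without sign errors across all nine distinct $\sigma$, and trusting LattE to handle the boundary correctly (strict versus nonstrict inequalities are irrelevant to the volume, since the omitted faces have measure zero). As a safeguard I would cross-check $\sum_\sigma \pr(G_\sigma)$, summed over all $36$ sequences, against an independent Monte Carlo estimate of $\pr(A \succ B \succ C \succ D \succ A)$, and verify that the resulting $\itrfour$ is consistent with the already-computed three-dice probabilities $\trthree$ and $\itrthree$ through the deletion relations stated above.
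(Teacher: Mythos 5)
Your proposal is correct and follows essentially the same route as the paper: the combinatorial reduction rests on the two preceding lemmas (emptiness of $G_\sigma$ when $\sigma$ has fewer than three distinct entries, and cyclic invariance of the volumes), and the nine probabilities are obtained by exact volume computation with LattE integrale. You merely make explicit the counting ($81-3-42=36$, free cyclic orbits of size $4$, hence $9$ representatives) that the paper leaves implicit, and correctly note that positivity of the nine computed values is what certifies the ``exactly 36'' claim.
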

\begin{proof}
See the Appendix for the Sage code to compute the volumes and probabilities.
\end{proof}

Notice that $\pr(G_{1123})=\pr(G_{1233})$, $\pr(G_{1132})=\pr(G_{1332})$, and $\pr(G_{1213})=\pr(G_{1323})$. Consider the event $G_{1123}$, which means that
$A \succ_1 B \succ_1 C \succ_2 D \succ_3 A$. Apply the star operator to each die and verify that verify $A^* \succ_1 D^* \succ_2 C^* \succ_3 B^* \succ_3 A^*$. That is, $\succ_1$ reverses direction and changes to $\succ_3$, while $\succ_3$ reverses and changes to $\succ_1$, and $\succ_2$ only reverses direction. Thus, the isometry 
\[ Q^4 \ra Q^4: (A,B,C,D) \mapsto (A^*,D^*,C^*,B^*)\]
 maps $G_{1123}$ onto $G_{1233}$. It also maps $G_{1132}$ onto $G_{2133}$, which has the same volume as $G_{1332}$ because $2133$ and $1332$ are cyclically related. And it maps $G_{1213}$ onto $G_{1323}$.

\begin{lemma}
\[ \pr(A \succ B \succ C \succ D \succ A) = \pr(G)= \displaystyle \frac{99930571}{1548288000} \]
\end{lemma}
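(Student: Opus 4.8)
The plan is to assemble the answer entirely from the data already tabulated in the preceding lemma; no new volume computation is required, so the remaining work is combinatorial bookkeeping followed by a single addition of fractions.

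First I would justify the additivity $\pr(G)=\sum_\sigma \pr(G_\sigma)$ that was asserted when $G$ was introduced. For each ordered pair of dice the three configurations $\succ_1,\succ_2,\succ_3$ are mutually exclusive, since they are distinguished by which of the coordinatewise inequalities $a_i>b_i$ fails. Two distinct strings $\sigma\ne\sigma'$ differ in at least one position and therefore prescribe incompatible configurations for at least one of the four comparisons in the cycle, so $G_\sigma\cap G_{\sigma'}$ is contained in the measure-zero locus where some comparison is a tie. Hence the $G_\sigma$ are disjoint up to measure zero and the probabilities add.

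Next I would organize the sum using the two structural lemmas. By the lemma classifying the empty $G_\sigma$, the only strings contributing are the $36$ that use all three of the symbols $1,2,3$, necessarily with one symbol repeated; equivalently $81-45=36$, after discarding the $3+24+12+6$ strings of types $(iiii)$, $(iiij)$, $(iijj)$, $(ijij)$ together with their cyclic shifts. None of these $36$ strings has a nontrivial cyclic symmetry, because a period-two string has the alternating form $ijij$ and a period-one string is constant, both of which were already discarded; therefore the cyclic group of order four acts freely and the $36$ strings split into exactly $9$ orbits of size $4$. Since $\pr(G_{\sigma'})=\pr(G_\sigma)$ whenever $\sigma'$ is a cyclic shift of $\sigma$, this gives
\[ \pr(G)=\sum_\sigma \pr(G_\sigma)=4\sum_{i=1}^{9}\pr(G_{\sigma^{(i)}}), \]
where $\sigma^{(1)},\dots,\sigma^{(9)}$ are the nine representatives listed in the table.

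Finally I would carry out the arithmetic. Factoring the nine denominators shows that their least common multiple is $2^{18}\cdot 3^3\cdot 5^3\cdot 7 = 6193152000$. Converting each of the nine tabulated probabilities to this common denominator and summing the resulting integer numerators yields $99930571$, so the inner sum equals $99930571/6193152000$. Multiplying by the factor $4$ coming from the orbit count cancels the $4$ relating $6193152000$ to $1548288000$, and produces $\pr(G)=99930571/1548288000$, as claimed. The only place an error could creep in is the denominator factorizations and the final addition; there is no genuine obstacle here, since the difficult step, the evaluation of the individual $\pr(G_\sigma)$, was already handled by LattE in the previous lemma.
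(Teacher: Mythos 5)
Your proposal is correct and follows essentially the same route as the paper, which simply sums the nine tabulated probabilities and multiplies by four for the cyclic shifts; you additionally supply the disjointness of the $G_\sigma$, the verification that the cyclic group acts freely on the $36$ surviving strings, and the explicit common-denominator arithmetic, all of which check out (the LCM is indeed $2^{18}\cdot 3^3\cdot 5^3\cdot 7=6193152000$ and the numerators sum to $99930571$).
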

\begin{proof}
There are four cyclic permutations of each $\sigma$ in Lemma 9. So summing them and multiplying by four gives $\pr(G)$. 
\end{proof}

\begin{theorem} With four 3-sided dice in the balanced uniform model, the probabilities of the four isomorphism classes of tournaments are the following:
\begin{align*}
 \trfour &= \frac{110413771}{258048000}  \approx 0.42788  \quad \text{(transitive chain)}\\
 \\
 \itrfour &=  \frac{99930571}{258048000} \approx 0.38726   \quad \text{(intransitive 4-cycle)}\\
 \\
 \winner &=  \frac{23851829}{258048000} \approx 0.09243  \quad \text{(winner + 3-cycle)} \\
 \\
 \loser &= \frac{23851829}{258048000}  \approx 0.09243 \quad \text{(loser + 3-cycle)}
\end{align*}
\end{theorem}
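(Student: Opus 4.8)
The plan is to determine the three remaining class probabilities by solving the small linear system assembled from facts already in hand, treating $\itrfour$ as known. By the preceding lemma together with $\itrfour = 6\,\pr(G)$ we have $\itrfour = 99930571/258048000$; from the three-dice theorem we have $\trthree = 973/1280$ and $\itrthree = 307/1280$; and the star-operator argument already establishes $\winner = \loser$. Setting $W := \winner = \loser$ therefore leaves only two unknowns, $\trfour$ and $W$.

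First I would use the second deletion relation
\[ \itrthree = \tfrac12\itrfour + \tfrac14\big(\winner + \loser\big) = \tfrac12\itrfour + \tfrac12 W, \]
which solves immediately for $W = 2\itrthree - \itrfour$. Converting to the common denominator $258048000$ (noting $258048000/1280 = 201600$) and subtracting yields $W = 23851829/258048000$, i.e.\ the stated values of $\winner$ and $\loser$.

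Next I would recover $\trfour$. Since a tie between two dice is the vanishing of a nonzero linear form and hence a measure-zero event, the four isomorphism classes partition $Q^4$ up to a null set, so the four class probabilities sum to $1$. Hence
\[ \trfour = 1 - \itrfour - 2W = \frac{258048000 - 99930571 - 47703658}{258048000} = \frac{110413771}{258048000}. \]

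The only subtlety is that the system is over-determined: I have not yet used the first deletion relation $\trthree = \trfour + \tfrac12\itrfour + \tfrac34\big(\winner+\loser\big)$. Rather than an obstacle, this furnishes a clean consistency check—substituting the computed $\trfour$ and $W$ must reproduce $\trthree = 973/1280$, which simultaneously validates the deletion bookkeeping, the $\winner=\loser$ symmetry, and the LattE volume computation behind $\itrfour$. Thus no genuine difficulty remains at this stage; all the real work is contained in the volume computation of $\pr(G)$, and here one needs only exact rational arithmetic carried out to the common denominator.
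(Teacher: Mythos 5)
Your proposal is correct and follows essentially the same route as the paper: take $\itrfour = 6\,\pr(G)$ as known, invoke $\winner=\loser$ from the star-operator symmetry, and solve a linear system built from the three-dice probabilities and the deletion relations. The only (harmless) difference is that you replace the first deletion relation by the fact that the four class probabilities sum to $1$ and keep that relation as a consistency check, whereas the paper solves the two deletion relations directly; since $\trthree+\itrthree=1$, the two systems are equivalent and the arithmetic checks out.
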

\begin{proof}
With four vertices there are six different 4-cycles. From Lemma 9 we know $\pr(G)$, the probability of one particular 4-cycle, and so 
\[\itrfour = 6 \,\pr(G)= 6\, \bigg(\frac{99930571}{1548288000}\bigg) 
    = \frac{99930571}{258048000}  .\]
We also know $\trthree = 973/1280$ and $\itrthree=307/1280$ from Theorem 5, so that we can solve for $\trfour$, $\winner$, and $\loser$ using the equations
\begin{align*}
   \trthree &= \trfour + \frac{1}{2} \itrfour +\frac{3}{4}\big(\winner + \loser \big) \\
   \itrthree &= \frac{1}{2} \itrfour +\frac{1}{4} \big(\winner + \loser \big)  \\
   \winner  &= \loser  
\end{align*} 
\end{proof}
\pagebreak

\section{Sage Computation: Three Dice}

We define the polytopes (or ``polyhedra'' ) in Sage using the half-space representation \cite{SagePoly}, which is a system of linear inequalities of the form
\[ 0 \le \beta + \alpha_1 x_1 +\alpha_2 x_2 + \cdots +\alpha_n x_n. \]
This inequality is represented by $(\beta, \alpha_1,\alpha_2,\ldots,\alpha_n)$, a  tuple of length $n+1$. Our variables are $a_1,a_2,b_1,b_2,c_1,c_2$, so that, for example, the inequality $ 2a_1 + 4a_2 \le 3$ is represented by the $7$-tuple $(3,-2,-4,0,0,0,0)$. The twelve inequalities defining $Q^3$ and their Sage representations are the following:
 \begin{align*}
 &\quad & 0 &\le a_1,  &  a_1 &\le a_2, & 1&\le 2a_1 + 2a_2, & 2a_1 + 4a_2 &\le 3,  & &\quad\\
 &\quad & 0 &\le b_1,  & b_1 &\le b_2, & 1 &\le 2b_1 + 2b_2, & 2b_1 + 4b_2 &\le 3, &&\quad\\
 &\quad & 0 &\le c_1,  & c_1 &\le c_2,  & 1 &\le 2c_1 + 2c_2, & 2c_1 + 4c_2 &\le 3.  &&\quad
 \end{align*}
 \begin{align*}
 (0,1,0,0,0,0,0),\quad (0,-1,1,0,0,0,0) ,\quad  (-1,2,2,0,0,0,0),\quad(3,-2,-4,0,0,0,0) ,\\
(0,0,0,1,0,0,0),\quad  (0,0,0,-1,1,0,0), \quad (-1,0,0,2,2,0,0),\quad (3,0,0,-2,-4,0,0),\\
(0,0,0,0,0,1,0),\quad (0,0,0,0,0,-1,1),\quad (-1,0,0,0,0,2,2),\quad (3,0,0,0,0,-2,-4).
 \end{align*}
 
 To define $E_{123}$ corresponding to $A \succ_1 B \succ_2 C \succ_3 A$ there are nine additional inequalities.
\begin{align*}
 a_1 < b_1,  \quad  a_2 > b_2 ,\quad a_3 > b_3 \Leftrightarrow a_1 +a_2 < b_1+b_2 ,\\
 b_1 > c_1,  \quad b_2 < c_2,  \quad b_3 > c_3 \Leftrightarrow  b_1+b_2 < c_1 +c_2,\\
 c_1 > a_1,  \quad  c_2 > a_2, \quad  c_3 < a_3 \Leftrightarrow c_1 +c_2 > a_1 +a_2.
\end{align*}
The Sage representations are
\begin{align*}
       (0,-1,0,1,0,0,0), &\quad (0,0,1,0,-1,0,0), \quad (0,-1,-1,1,1,0,0), \\
      (0,0,0,1,0,-1,0), &\quad (0,0,0,0,-1,0,1), \quad (0,0,0,-1,-1,1,1), \\
      (0,-1,0,0,0,1,0), &\quad (0,0,-1,0,0,0,1),\quad  (0,-1,-1,0,0,1,1).
\end{align*}
The polytope $E_{132}$ corresponding to $A \succ_1 B \succ_3 C \succ_2 A$ has the additional inequalities represented by
\begin{align*}
      (0,-1,0,1,0,0,0), &\quad (0,0,1,0,-1,0,0), \quad (0,-1,-1,1,1,0,0), \\
      (0,0,0,1,0,-1,0), &\quad (0,0,0,0,1,0,-1), \quad (0,0,0,1,1,-1,-1), \\
      (0,-1,0,0,0,1,0), &\quad (0,0,1,0,0,0,-1), \quad (0,1,1,0,0,-1,-1).
\end{align*}

For the volume computations we call the integrate function in \emph{LattE integrale} using the Sage interface \cite{LattE}.

\newpage
\large
\renewcommand{\baselinestretch}{1.1}   
\normalsize

\begin{Verbatim}[frame=single,label=Sage input,fontfamily=courier]

from sage.interfaces.latte import integrate 

Q3_ieqs = [(0,1,0,0,0,0,0),(-1,2,2,0,0,0,0),(0,-1,1,0,0,0,0),
           (3,-2,-4,0,0,0,0),(0,0,0,1,0,0,0),(-1,0,0,2,2,0,0),
           (0,0,0,-1,1,0,0),(3,0,0,-2,-4,0,0),(0,0,0,0,0,1,0),
           (-1,0,0,0,0,2,2),(0,0,0,0,0,-1,1),(3,0,0,0,0,-2,-4)] 
Q3 = Polyhedron(ieqs = Q3_ieqs)
vol_Q3 = integrate(Q3.cdd_Hrepresentation(),cdd=True)

E123_ieqs = [(0,-1,0,1,0,0,0),(0,0,1,0,-1,0,0),(0,-1,-1,1,1,0,0),
             (0,0,0,1,0,-1,0),(0,0,0,0,-1,0,1),(0,0,0,-1,-1,1,1),
             (0,-1,0,0,0,1,0),(0,0,-1,0,0,0,1),(0,-1,-1,0,0,1,1)] 
E123 =  Polyhedron(ieqs = Q3_ieqs + E123_ieqs )
vol_E123 = integrate(E123.cdd_Hrepresentation(), cdd=True)
prob_E123 = vol_E123/vol_Q3

E213_ieqs = [(0,-1,0,1,0,0,0),(0,0,1,0,-1,0,0),(0,-1,-1,1,1,0,0),
            (0,0,0,1,0,-1,0),(0,0,0,0,1,0,-1),(0,0,0,1,1,-1,-1),
            (0,-1,0,0,0,1,0),(0,0,1,0,0,0,-1),(0,1,1,0,0,-1,-1)] 
E213 = Polyhedron(ieqs = Q3_ieqs + E213_ieqs)
vol_E213 = integrate(E213.cdd_Hrepresentation(), cdd=True)      
prob_E213 = vol_E213/vol_Q3

prob_E = 3*prob_E123 + 3*prob_E213

print("vol(Q3) =",vol_Q3)
print("P(E123) =",prob_E123,"=",n(prob_E123,digits=9))
print("P(E213) =",prob_E213,"=",n(prob_E213,digits=9))     
print("P(A>B>C>A) = P(E) =",prob_E,"=",n(prob_E,digits=9))
print("P(intransitive) =",2*prob_E,"=",n(2*prob_E,digits=9)) 
\end{Verbatim}

\begin{Verbatim}[frame=single,label=Output]
vol(Q3) = 1/512
P(E123) = 23/1800 = 0.0127777778
P(E213) = 3133/115200 = 0.0271961806
P(A>B>C>A) = P(E) = 307/2560 = 0.119921875
P(intransitive) = 307/1280 = 0.239843750
\end{Verbatim}

\large
\renewcommand{\baselinestretch}{1.2}   
\normalsize

\section{Sage Computation: Four Dice}

There are 16 inequalties needed to define $Q^4$ in Sage. Each is represented by a tuple of length 9. They are straightforward extensions of those used to define $Q^3$. 

To define $G_\sigma$ where $\sigma = ijkl$ we use the function $\texttt{G(i,j,k,l)}$ which returns the Sage polyhedron. This requires the inequalities for $Q^4$ and the inequalities for the four relations. 

For $m=1,2,3,4$ and $i=1,2,3$ the function $\texttt{g(m,i)}$ returns a list of three tuples asserting that the $m$th dominance relation is $\succ_i$. For example, $g(3,2)$ returns the tuples for $C \succ_2 D$ since the third relation is the one between $C$ and $D$. Then $G(i,j,k,l)$ simply concatenates the inequalities for $Q^4$ with those coming from $\texttt{g(1,i),g(2,j),g(3,k)}$, and $\texttt{g(4,l)}$, a total of 28 inequalties. The definition of $\texttt{g(m,i)}$ uses vectors $\texttt{a1,a2,a3},\ldots,\texttt{d1,d2,d3}$ so that an inequality such as $a_2 > b_2$ is represented by $\texttt{a2-b2}$. The vector $\texttt{a3}$ is actually $\texttt{-a1-a2}$ and likewise for $\texttt{b3,c3,d3}$, because an inequality such as $a_3 > b_3$ is equivalent to $-a_1-a_2 > -b_1 -b_2$.

The function $\texttt{pr(i,j,k,l)}$ returns the probability of $G_{ijkl}$. First it finds the dimension of $\texttt{G(i,j,k,l)}$. If the dimension is less than eight, then the polytope has zero volume and the probability is zero. If the dimension is eight, then the function uses the integrate function from \emph{LattE integrale} to find the volume and multiplies by $4096$ to get the probability. It is necessary to check the dimension first, because the integrate function does not return zero on polytopes of less than the full dimension.

\newpage
\large
\renewcommand{\baselinestretch}{1.05}   
\normalsize
\begin{Verbatim}[frame=single,label=Sage input,fontfamily=courier]

from sage.interfaces.latte import integrate 

Q4_ieqs = [(0,1,0,0,0,0,0,0,0),(-1,2,2,0,0,0,0,0,0),
           (0,-1,1,0,0,0,0,0,0),(3,-2,-4,0,0,0,0,0,0),
           (0,0,0,1,0,0,0,0,0),(-1,0,0,2,2,0,0,0,0),
           (0,0,0,-1,1,0,0,0,0),(3,0,0,-2,-4,0,0,0,0),
           (0,0,0,0,0,1,0,0,0),(-1,0,0,0,0,2,2,0,0),
           (0,0,0,0,0,-1,1,0,0),(3,0,0,0,0,-2,-4,0,0),
           (0,0,0,0,0,0,0,1,0),(-1,0,0,0,0,0,0,2,2),
           (0,0,0,0,0,0,0,-1,1),(3,0,0,0,0,0,0,-2,-4)] 
Q4 = Polyhedron(ieqs = Q4_ieqs)
vol_Q4 = integrate(Q4.cdd_Hrepresentation(),cdd=True)

a1=vector((0,1,0,0,0,0,0,0,0));
a2=vector((0,0,1,0,0,0,0,0,0));
a3=vector((0,-1,-1,0,0,0,0,0,0));
b1=vector((0,0,0,1,0,0,0,0,0));
b2=vector((0,0,0,0,1,0,0,0,0));
b3=vector((0,0,0,-1,-1,0,0,0,0));
c1=vector((0,0,0,0,0,1,0,0,0));
c2=vector((0,0,0,0,0,0,1,0,0));
c3=vector((0,0,0,0,0,-1,-1,0,0));
d1=vector((0,0,0,0,0,0,0,1,0));
d2=vector((0,0,0,0,0,0,0,0,1));
d3=vector((0,0,0,0,0,0,0,-1,-1));
def g(m,i):
    s = ((-1,1,1),(1,-1,1),(1,1,-1))
    if m==1:
        return [s[i-1][0]*(a1-b1),s[i-1][1]*(a2-b2),
                s[i-1][2]*(a3-b3)]
    elif m==2:
        return [s[i-1][0]*(b1-c1),s[i-1][1]*(b2-c2),
                s[i-1][2]*(b3-c3)]
    elif m==3:
        return [s[i-1][0]*(c1-d1),s[i-1][1]*(c2-d2),
                s[i-1][2]*(c3-d3)]
    elif m==4:
        return [s[i-1][0]*(d1-a1),s[i-1][1]*(d2-a2),
                s[i-1][2]*(d3-a3)]
end 
\end{Verbatim}
\begin{Verbatim}[frame=single,label=Sage input,fontfamily=courier]

def G(i,j,k,l):
   return Polyhedron(ieqs = Q4_ieqs+g(1,i)+g(2,j)+g(3,k)+g(4,l))
end

def pr(i,j,k,l):
   event = G(i,j,k,l)
   if event.dim() < 8:
       eturn 0
   else:
      return 4096*integrate(event.cdd_Hrepresentation(),cdd=True)
end

print(" Event         Probability")
print("G(1,1,2,3)    ",pr(1,1,2,3))
print("G(1,1,3,2)    ",pr(1,1,3,2))
print("G(1,2,1,3)    ",pr(1,2,1,3))
print("G(1,2,2,3)    ",pr(1,2,2,3))
print("G(1,2,3,2)    ",pr(1,2,3,2))
print("G(1,3,2,2)    ",pr(1,3,2,2))
print("G(1,2,3,3)    ",pr(1,2,3,3))
print("G(1,3,2,3)    ",pr(1,3,2,3))
print("G(1,3,2,2)    ",pr(1,3,2,2))
\end{Verbatim}
\begin{Verbatim}[frame=single,label=Output]
 Event         Probability 
G(1,1,2,3)     229/322560
G(1,1,3,2)     691507/294912000
G(1,2,1,3)     40913/15482880
G(1,2,2,3)     5431/8064000
G(1,2,3,2)     32299/16515072
G(1,3,2,2)     38929/18432000
G(1,2,3,3)     229/322560
G(1,3,2,3)     40913/15482880
G(1,3,2,2)     38929/18432000
\end{Verbatim}

\newpage


\begin{thebibliography}{9}

\bibitem{CGGLM2016} B. Conrey, J. Gabbard, K. Grant, A. Liu, and K. E. Morrison. Intransitive dice. \emph{Mathematics Magazine}, 89(2):133--143, 2016.

\bibitem{CornacchiaHazla2020} E. Cornacchia and J. H\k{a}z{\l}a. Intransitive dice tournament is not quasirandom. arXiv:2011.10067, 19 Nov 2020.

\bibitem{Polymath2022} D. H. J. Polymath, The probability that a random triple of dice is intransitive. arXiv:2211.16156, 29 Nov 2022.

\bibitem{SagePoly} Polyhedra, Sage 9.7 Reference Manual, 
\url{https://doc.sagemath.org/html/en/reference/discrete_geometry/sage/geometry/polyhedron/constructor.html}

\bibitem{LattE} Interface to LattE integrale programs, Sage 9.7 Reference Manual, 
\url{https://doc.sagemath.org/html/en/reference/interfaces/sage/interfaces/latte.html}
\end{thebibliography}
\end{document}